\documentclass[a4paper,12pt,final]{amsart}
\usepackage{times,a4wide,mathrsfs,amssymb,amsmath,amsthm}

\newcommand{\C}{\mathbb{C}}

\newcommand{\QQ}{\mathbb{Q}}
\newcommand{\NN}{\mathbb{N}}
\newcommand{\PP}{\mathbb{P}}

\newcommand{\OO}{\mathcal O}

\newcommand{\CC}{\mathcal C}

\newcommand{\FF}{\mathcal F}

\newcommand{\wt}{\widetilde}
\newcommand{\rom}{\romannumeral}

\newtheorem{theorem}{Theorem}[section]
\newtheorem{claim}[theorem]{Claim}
\newtheorem{lemma}[theorem]{Lemma}

\newtheorem{proposition}[theorem]{Proposition}
\newtheorem{conjecture}[theorem]{Conjecture}

\newtheorem{definition}[theorem]{Definition}
\newtheorem{convention}{Conventions}

\newtheorem{problem}[theorem]{Problem}

\newtheorem{nonumbering}{Theorem}

\newtheorem{nonumberingt}{Acknowledgements}

\begin{document}
\author[Robert Laterveer]
{Robert Laterveer}

\address{Institut de Recherche Math\'ematique Avanc\'ee,
CNRS -- Universit\'e 
de Strasbourg,\
7 Rue Ren\'e Des\-car\-tes, 67084 Strasbourg CEDEX,
FRANCE.}
\email{robert.laterveer@math.unistra.fr}

\title{A remark on Beauville's splitting property}

\begin{abstract} Let $X$ be a hyperk\"ahler variety. Beauville has conjectured that a certain subring of the Chow ring of $X$ should inject into cohomology. This note proposes a similar conjecture for the ring of algebraic cycles on $X$ modulo algebraic equivalence: a certain subring (containing divisors and codimension $2$ cycles) should inject into cohomology.
We present some evidence for this conjecture.
\end{abstract}

\keywords{Algebraic cycles, Chow groups, Bloch--Beilinson filtration, hyperk\"ahler varieties, multiplicative Chow--K\"unneth decomposition, splitting property}
\subjclass[2010]{Primary 14C15, 14C25, 14C30.}

\maketitle

\section{Introduction}

   For a smooth projective variety $X$ over $\C$, let $A^i(X)=CH^i(X)_{\QQ}$ denote the Chow group of codimension $i$ algebraic cycles modulo rational equivalence with $\QQ$--coefficients. Intersection product defines a ring structure on $A^\ast(X)=\oplus_i A^i(X)$. In the case of $K3$ surfaces, this ring structure has a remarkable property:

\begin{theorem}[Beauville--Voisin \cite{BV}]\label{K3} Let $S$ be a $K3$ surface. Let $D_i, D_i^\prime\in A^1(S)$ be a finite number of divisors. Then
  \[ \sum_i D_i\cdot D_i^\prime=0\ \ \ \hbox{in}\ A^2(S)_{}\ \Leftrightarrow\ \sum_i [D_i]\cup [D_i^\prime]=0\ \ \ \hbox{in}\ H^4(S,\QQ)\ .\]
  \end{theorem}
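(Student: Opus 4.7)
The plan is to reduce the statement to the existence of a distinguished degree-one class $c_S \in A^2(S)$ satisfying
\[ D \cdot D' = \deg(D \cdot D')\, c_S \qquad \text{in } A^2(S) \]
for all $D, D' \in A^1(S)$. Once such a class is in hand the theorem is immediate: the implication $\Rightarrow$ is trivial via the cycle class map, while for the converse, if $\sum_i [D_i] \cup [D_i']$ vanishes in $H^4(S,\QQ)$ then $\sum_i \deg(D_i \cdot D_i') = 0$, and the displayed identity forces $\sum_i D_i \cdot D_i' = 0$ in $A^2(S)$.

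To construct $c_S$, I would first observe that on any (possibly singular) rational curve $R \subset S$, all points define the same class in $A^2(S)$: this is immediate by pushing forward rational equivalences from the normalisation $\PP^1 \to R \hookrightarrow S$. I would then \emph{define} $c_S$ as this common class, for some fixed rational curve on $S$ (whose existence is guaranteed by Bogomolov--Mumford). Independence from the choice of curve, while aesthetically desirable, is not strictly needed for the stated theorem; it can be established by connecting two rational curves through a one-parameter family, again exploiting the abundance of rational curves on a $K3$.

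The core step is to prove that $D \cdot D' \in \QQ\, c_S$. By bilinearity, and by writing an arbitrary divisor as a difference of ample ones, it suffices to treat the case $D = D' = H$ with $H$ ample. For $m \gg 0$ the linear system $|mH|$ contains a rational curve $C$ (Bogomolov--Mumford, Mori--Mukai). Since $C$ is rationally equivalent to $mH$ in $A^1(S)$, one has $H \cdot C = m H^2$ in $A^2(S)$. On the other hand, $H \cdot C$ equals the pushforward to $S$ of the zero-cycle $H|_C$ on $C$, which is a zero-cycle of degree $m\deg(H^2)$ on a rational curve, hence equal on $C$ to $m\deg(H^2)$ copies of a single point; pushing forward, $H \cdot C = m\deg(H^2)\, c_S$ in $A^2(S)$. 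Cancelling $m$ yields $H^2 = \deg(H^2)\, c_S$, as required.

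The main obstacle is geometric: one must guarantee that every ample linear system contains, after passing to a sufficient multiple, a rational representative. This is the nontrivial input, going back to Bogomolov--Mumford and refined by Mori--Mukai, that makes the distinguished zero-cycle $c_S$ accessible and the identity $H^2 = \deg(H^2)\, c_S$ provable; without it the argument cannot get off the ground, and indeed the analogue of Theorem \ref{K3} is known to fail for surfaces that lack such an abundance of rational curves.
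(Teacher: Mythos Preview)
The paper does not itself prove this statement; it is quoted from \cite{BV} as background and motivation. Your sketch is essentially the original Beauville--Voisin argument and is correct in outline.

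There is, however, a genuine slip. You claim that independence of $c_S$ from the choice of rational curve is ``not strictly needed for the stated theorem''. This is wrong whenever the Picard number of $S$ exceeds~$1$. Your reduction ``it suffices to treat the case $D=D'=H$ with $H$ ample'' implicitly uses the polarisation identity $2H\cdot H'=(H+H')^2-H^2-(H')^2$, and for that you need $H^2=\deg(H^2)\,c_S$ to hold for \emph{every} ample $H$ with one and the \emph{same} class $c_S$. But your computation of $H^2$ pins down $c_S$ only as the class of a point on some rational curve $C\in|mH|$, which a priori varies with $H$. You must therefore either establish the independence (as you yourself indicate, via chains of rational curves), or argue as Beauville--Voisin do: show that $\operatorname{Pic}(S)_\QQ$ is spanned by classes of rational curves, and observe that for distinct rational curves $C_1,C_2$ the cycle $C_1\cdot C_2$ is supported on $C_1\cap C_2$, so that intersecting rational curves automatically share the same $c_S$. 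Either way, the well-definedness of $c_S$ is doing real work and cannot be waived.
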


In the wake of this result (combined with results concerning the Chow ring of abelian varieties \cite{Beau}), Beauville has asked which varieties have behaviour similar to theorem \ref{K3}. This is the problem of determining which varieties verify the ``splitting property'' of \cite{Beau3}. We briefly state this problem here as follows:

\begin{problem}[Beauville \cite{Beau3}]\label{prob} Find a class $\CC$ of varieties (containing $K3$ surfaces, abelian varieties and hyperk\"ahler varieties), such that for any $X\in\CC$, the Chow ring of $X$ admits a multiplicative bigrading $A^\ast_{(\ast)}(X)$, with
  \[ A^i(X)=\bigoplus_{j= 0}^i A^i_{(j)}(X)\ \ \ \hbox{for\ all\ }i\ .\] 
This bigrading should split the conjectural Bloch--Beilinson filtration, in particular 
  \begin{equation}\label{ideal} A^i_{hom}(X)= \bigoplus_{j\ge 1} A^i_{(j)}(X)\ .\end{equation}
  \end{problem}
  
 This question is hard to answer in practice, since we do not have the Bloch--Beilinson filtration at our disposal. However, as noted by Beauville, the class $\CC$ has some nice properties that can be tested in practice. In particular, the conjecture that hyperk\"ahler varieties are in $\CC$ leads to the so--called {\em weak splitting property conjecture\/}, which is the following falsifiable statement: 
 
 \begin{conjecture}[Beauville \cite{Beau3}, Voisin \cite{V12}]\label{split} Let $X$ be a hyperk\"ahler variety, and let $D^\ast(X)\subset A^\ast(X)$ be the $\QQ$--subalgebra generated by divisors and Chern classes. 
 The cycle class map induces an injection
   \[ D^i(X)\ \hookrightarrow\ H^{2i}(X,\QQ)\ \]
   for all $i$. 
  \end{conjecture}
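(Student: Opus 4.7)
The plan is to reduce Conjecture \ref{split} to the existence of a \emph{multiplicative Chow--K\"unneth decomposition} (MCK) of the diagonal of $X$, which directly yields the bigrading required by Problem \ref{prob}. Such a decomposition consists of mutually orthogonal idempotents $\{\pi_0,\ldots,\pi_{2d}\}\subset A^d(X\times X)$, where $d=\dim X$, summing to $[\Delta_X]$ and inducing the K\"unneth projectors in cohomology, subject to a multiplicativity condition expressing the small diagonal $\delta_\ast[X]\in A^{2d}(X^3)$ in a form compatible with the intersection product. Setting $A^i_{(j)}(X):=(\pi_{2i-j})_\ast A^i(X)$ then produces a multiplicative bigrading with $A^i(X)=\bigoplus_{j=0}^i A^i_{(j)}(X)$, and one expects (property (\ref{ideal})) that $A^i_{hom}(X)=\bigoplus_{j\ge 1}A^i_{(j)}(X)$.

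Granted such an MCK, the next step is to show that $D^\ast(X)\subseteq A^\ast_{(0)}(X)$. For divisors this is automatic: the cycle class map is injective on $A^1$, so $A^1_{hom}(X)=0$, forcing $A^1(X)=A^1_{(0)}(X)$. For Chern classes, I would use results of Markman expressing the $c_i(T_X)$ in terms of natural classes associated to the hyperk\"ahler structure, and verify that these lift to the $(0)$-part under an MCK compatible with that structure; this is the type of computation that has been carried out case-by-case in the literature. The conjecture then follows immediately: $D^\ast(X)\subset A^\ast_{(0)}(X)$, and by construction $A^\ast_{(0)}(X)$ is a direct summand complementary to $A^\ast_{hom}(X)$, so the cycle class map is injective on it.

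The main obstacle is clearly the very first step. The existence of an MCK for a \emph{general} hyperk\"ahler variety remains open; candidates exist for the $K3^{[n]}$ and generalized Kummer deformation types and (partially) for the O'Grady types, but the MCK is not known to behave well under deformations of the underlying variety. Moreover, in each known case verifying multiplicativity rests on a deep relation for the small diagonal generalizing the Beauville--Voisin identity (Theorem \ref{K3}) on $K3$ surfaces. Even with an MCK in hand, property (\ref{ideal}) is a strong form of the Bloch--Beilinson conjecture that is out of reach at present.

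In practice, for a specific family of hyperk\"ahler varieties one often circumvents these obstacles by exhibiting a distinguished zero-cycle $\oo_X\in A^{2d}(X)$, analogous to the Beauville--Voisin point on a $K3$, and proving directly that every top-degree intersection in $D^\ast(X)$ is a rational multiple of $\oo_X$. This can be done using the presence of sufficiently many algebraically coisotropic subvarieties in the sense of Voisin (or, in favourable situations, via Lagrangian fibrations and rational curves). Injectivity in codimension less than $2d$ is then extracted from the top-degree case by intersecting with a sufficiently ample class and invoking the Hard Lefschetz theorem.
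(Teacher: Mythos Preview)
The statement you are attempting to prove is labelled \emph{Conjecture} in the paper, and the paper contains no proof of it; indeed, the author explicitly writes that the note ``does not directly address problem \ref{prob} or conjecture \ref{split}''. So there is no ``paper's own proof'' to compare your proposal against.

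Your proposal is an honest and well-informed outline of the expected strategy, and you correctly flag its two essential gaps: (a) the existence of a multiplicative Chow--K\"unneth decomposition for an arbitrary hyperk\"ahler variety is open, and (b) even granting such a decomposition, property (\ref{ideal}) --- that $A^i_{(0)}(X)$ injects into cohomology --- is a form of the Bloch--Beilinson conjectures and is not known in general. Both gaps are genuine and unresolved, so what you have written is a heuristic roadmap rather than a proof. One small correction: you write that an MCK gives $A^i(X)=\bigoplus_{j=0}^i A^i_{(j)}(X)$, but a priori an MCK only yields $A^i(X)=\bigoplus_{j} A^i_{(j)}(X)$ with $j$ ranging over all integers; the vanishing of $A^i_{(j)}(X)$ for $j<0$ is itself part of the conjectural picture (Murre's conjecture), not an automatic consequence of the MCK axioms. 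Your final paragraph, on distinguished zero-cycles and coisotropic subvarieties, describes the approach actually used in the partial results cited in the paper (\cite{V12}, \cite{LFu2}, \cite{Rie2}, \cite{CP}), so that part accurately reflects the state of the art.
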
 
  
  (cf. \cite{V12}, \cite{V13}, \cite{Vo}, \cite{V14}, \cite{LFu2}, \cite{Rie2}, \cite{Y}, \cite{CP}  for extensions and partial results concerning conjecture \ref{split}.)

An interesting novel approach to problem \ref{prob} (as well as a reinterpretation of theorem \ref{K3}) is provided by the concept of {\em multiplicative Chow--K\"unneth decomposition\/}, giving rise to unconditional constructions of a bigraded ring structure on the Chow ring of certain varieties \cite{SV}, \cite{V6}, \cite{SV2}, \cite{FTV}. 
(The bigrading constructed in these works should be seen as a {\em candidate\/} for the (only ideally existing) bigrading evoked in problem \ref{prob}; in particular, it is not known whether property (\ref{ideal}) holds for these candidates.)

This note does not directly address problem \ref{prob} or conjecture \ref{split}. Instead, our aim is to propose a modified version of conjecture \ref{split}. The modification consists in considering the groups $B^\ast(X)$ of cycles with $\QQ$--coefficients modulo {\em algebraic equivalence\/}. For any $X\in\CC$ (in particular, for a hyperk\"ahler variety), the conjectural bigrading $A^\ast_{(\ast)}(X)$ is expected to be of motivic origin (i.e., induced by a Chow--K\"unneth decomposition). As such, one expects the bigrading to pass to algebraic equivalence and
induce a bigrading $B^\ast_{(\ast)}(X)$. Now, it has been conjectured that (for any smooth projective variety) the deepest level $F^i A^i(X)$ of the conjectural Bloch--Beilinson filtration should be algebraically trivial \cite{J3}, and so $B^2_{(2)}(X)=0$. For a hyperk\"ahler variety, one expects that also $B^2_{(1)}(X)=0$ (this is clear when $X$ is of $K3^{[n]}$ type; for general hyperk\"ahler varieties, one can reason as in the proof of proposition \ref{lessthan0} below), and so conjecturally  
  \[ B^2(X)= B^2_{(0)}(X)\ .\]
  This leads to the following variant of conjecture \ref{split}:
                
  \begin{conjecture}\label{conjB} Let $X$ be a hyperk\"ahler variety. Let $E^\ast(X)\subset B^\ast(X)$ be the $\QQ$--subalgebra generated by $B^1(X)$, $B^2(X)$ and the Chern classes. The cycle class map induces injections
    \[ E^i(X)\ \hookrightarrow\ H^{2i}(X,\QQ)\ \ \ \forall i\ .\]
   \end{conjecture}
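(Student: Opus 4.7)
The plan is to attack this via the framework of multiplicative Chow--K\"unneth (MCK) decompositions. Since an MCK is currently established only for specific deformation types of hyperk\"ahler varieties ($K3^{[n]}$-type, generalized Kummer type, and the O'Grady types $\mathrm{OG}_6$ and $\mathrm{OG}_{10}$), I would first prove the conjecture under the hypothesis that $X$ admits an MCK decomposition. The associated bigrading $A^\ast_{(\ast)}(X)$ on the Chow ring is defined by an algebraic self-correspondence of $X\times X$ and therefore descends to a bigrading $B^\ast_{(\ast)}(X)$ on cycles modulo algebraic equivalence. The target of the argument is then to show, first, that the generators of $E^\ast(X)$ all live in the weight-zero piece $B^\ast_{(0)}(X)$, and, second, that this piece injects into cohomology.

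The argument breaks into three steps. First, for divisors the equality $A^1(X)=A^1_{(0)}(X)$ (and hence $B^1(X)=B^1_{(0)}(X)$) is built into standard constructions of the MCK, and the same is true for Chern classes, which are motivic in origin. Second, the central claim is $B^2(X)\subset B^2_{(0)}(X)$, equivalent to the two vanishings $B^2_{(1)}(X)=0$ and $B^2_{(2)}(X)=0$. The latter follows from Jannsen's conjecture, recalled in the introduction, that the deepest level of the Bloch--Beilinson filtration is algebraically trivial. The former is immediate for $K3^{[n]}$-type because $H^3(X,\QQ)=0$ kills the relevant motivic summand, and in general is to be extracted from the argument the author flags in proposition \ref{lessthan0}. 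Third, by multiplicativity of the MCK the subring generated by weight-zero elements remains in weight zero, so $E^i(X)\subset B^i_{(0)}(X)$; one is then reduced to the injectivity $B^i_{(0)}(X)\hookrightarrow H^{2i}(X,\QQ)$, which is essentially the design principle of the projector defining the weight-zero piece (noting that passing from rational to algebraic equivalence only enlarges the kernels compatibly).

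The main obstacle is twofold. On the one hand, restricting to varieties with a known MCK only settles the conjecture in the four known deformation types; an unconditional proof for all hyperk\"ahler varieties would require a strategy independent of MCK, since such decompositions may not be available (or known) in general. On the other hand, even within the MCK setting, the vanishing $B^2_{(1)}(X)=0$ is the real technical core away from the $K3^{[n]}$ case: for generalized Kummer, $\mathrm{OG}_6$ and $\mathrm{OG}_{10}$ one cannot invoke triviality of $H^3$, and one must instead exploit specific features of the geometry---Lagrangian fibrations, the Beauville--Bogomolov form on $H^2$, or the existence of sufficiently many algebraic correspondences---to rule out contributions to the weight-one piece of $B^2$. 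This is where I expect the bulk of the actual work to be concentrated, and is presumably what the author has in mind when he promises only ``some evidence'' rather than a proof.
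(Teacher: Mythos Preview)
The statement is a \emph{conjecture}; the paper does not prove it, and only establishes the cases $i\ge \dim X-1$ for Hilbert squares of $K3$ surfaces, Fano varieties of lines on very general cubic fourfolds, and generalized Kummer varieties. Your three-step outline does match the paper's strategy for these partial results, but you have misidentified where the real difficulty lies.

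Your step~3 is the genuine gap. You write that the injectivity $B^i_{(0)}(X)\hookrightarrow H^{2i}(X,\QQ)$ is ``essentially the design principle of the projector defining the weight-zero piece''. It is not. The projector $\Pi^X_{2i}$ defining $A^i_{(0)}(X)$ is only required to have cohomology class equal to the K\"unneth component $\pi^X_{2i}$; nothing in the construction forces $\Pi^X_{2i}$ to act as zero on $A^i_{hom}(X)$ (or on $B^i_{hom}(X)$). That vanishing is exactly property~(\ref{ideal}) of the paper, which is explicitly flagged as \emph{unknown} for the MCK decompositions of \cite{SV} and \cite{FTV} except in the range $i\ge \dim X-1$. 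This is precisely why the paper's evidence is restricted to $1$--cycles, and why lemma~\ref{inj} and the second lemma in the proof of theorem~\ref{kummer} require genuine work (the Fourier transform identity $\FF\circ\FF=\tfrac{25}{2}\,\ide$ in one case, and a support argument combined with Kimura nilpotence in the other).

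A secondary issue: in step~2 you appeal to Jannsen's conjecture for $B^2_{(2)}(X)=0$. That conjecture is open; the paper instead proves this vanishing unconditionally in each case by exhibiting a specific correspondence $L$ with $A^2_{(2)}(X)=L_\ast A^4_{(2)}(X)$ (claim~\ref{only0}), respectively by citing \cite{hard} (proposition~\ref{lessthan0}). Your identification of $B^2_{(1)}(X)=0$ as a technical core is correct for the generalized Kummer case, and the paper handles it via the generalized Hodge conjecture for self-products of abelian surfaces plus finite-dimensionality of the motive (lemma~\ref{equiv}); but this is not the bottleneck that prevents the full conjecture from being proven.
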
    
     
    Here is some evidence we have found for conjecture \ref{conjB}: 
    
   \begin{nonumbering}[=theorem \ref{HK4}] Let $X$ be either 
  
  \noindent
  (\rom1) a Hilbert scheme $X=S^{[2]}$, where $S$ is a projective $K3$ surface, or
  
  \noindent
  (\rom2) a Fano variety of lines $X=F(Y)$, where $Y\subset\PP^5(\C)$ is a very general cubic fourfold.     
  
  The cycle class map induces an injection
  \[ E^3(X)\ \hookrightarrow\ H^6(X,\QQ)\ .\] 
      \end{nonumbering}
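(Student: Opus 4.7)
The strategy is to leverage the multiplicative Chow--K\"unneth (MCK) decompositions of $X$, which are available in both cases. Such a decomposition equips $A^\ast(X)$ with a multiplicative bigrading $A^\ast_{(\ast)}(X)$, and since the defining Chow--K\"unneth projectors are represented by algebraic correspondences, the bigrading descends to a multiplicative bigrading $B^\ast_{(\ast)}(X)$ on cycles modulo algebraic equivalence. The plan is first to show $E^\ast(X) \subset B^\ast_{(0)}(X)$, and then to verify that $B^3_{(0)}(X)$ injects into $H^6(X,\QQ)$.

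For the inclusion $E^\ast(X) \subset B^\ast_{(0)}(X)$, the multiplicativity relation $A^i_{(j)} \cdot A^{i'}_{(j')} \subset A^{i+i'}_{(j+j')}$ reduces the question to the generators. The equality $B^1(X) = B^1_{(0)}(X)$ is standard, the Chern classes of $X$ lie in $A^\ast_{(0)}(X)$ for these particular hyperk\"ahler fourfolds, and the discussion preceding conjecture \ref{conjB} already gives $B^2(X) = B^2_{(0)}(X)$ (both $B^2_{(2)}$ and $B^2_{(1)}$ vanish in $K3^{[2]}$-type, the latter being ``clear'' as indicated in the introduction). The multiplicative bigrading then places every generator, hence all of $E^\ast(X)$, inside $B^\ast_{(0)}(X)$.

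The main step is to establish $B^3_{(0)}(X) \hookrightarrow H^6(X,\QQ)$, equivalently $A^3_{(0)}(X) \cap A^3_{hom}(X) \subset A^3_{alg}(X)$. For $X=S^{[2]}$, I would transfer the problem to $S \times S$ via the natural correspondence coming from the Hilbert--Chow morphism: the MCK projectors on $X$ are expressible in terms of the standard Chow--K\"unneth projectors of $S$, reducing the assertion to an analogous statement on $S \times S$, which follows from the Beauville--Voisin theorem \ref{K3} applied to $S$ together with the triviality of Griffiths groups of $0$-cycles on a surface. For $X=F(Y)$ with $Y$ very general, I would use the Fano incidence correspondence $P \subset F(Y) \times Y$ to rewrite the graded-$(0)$ part of $A^3(X)$ in terms of cycles on $Y$, exploiting rational connectedness of the cubic $Y$ (so $A_0(Y)_{hom}=0$) and the very general hypothesis (which controls the Griffiths groups of $Y$ mod algebraic equivalence) to conclude.

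The principal obstacle is this last injection. At the level of rational equivalence the analogous statement $A^3_{(0)}(X) \hookrightarrow H^6(X,\QQ)$ is wide open -- it is essentially part of problem \ref{prob} -- but passing to algebraic equivalence provides enough slack: Griffiths groups modulo algebraic equivalence in codimension $3$ become controllable through the explicit correspondence-geometry of $S^{[2]}$ and $F(Y)$, and making this control precise (in particular, checking that the graded-$(0)$ projector on $A^3$ factors through an algebraic-equivalence-controlled piece after applying the correspondences above) constitutes the bulk of the argument.
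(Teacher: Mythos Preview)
Your overall architecture---show $E^\ast(X)\subset B^\ast_{(0)}(X)$ and then prove that $B^3_{(0)}(X)$ injects into $H^6(X)$---is exactly the paper's, but you misjudge where the difficulty lies and thereby miss the actual argument.

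The serious gap is your assertion that ``at the level of rational equivalence the analogous statement $A^3_{(0)}(X)\hookrightarrow H^6(X,\QQ)$ is wide open.'' For these two particular fourfolds it is \emph{not} open: it is a direct consequence of the Shen--Vial Fourier transform. If $a\in A^3_{(0)}(X)$ then $\FF(a)\in A^1_{(0)}(X)=A^1(X)$; if $a$ is in addition homologically trivial then so is $\FF(a)$, hence $\FF(a)=0$ in $A^1(X)$, and the relation $\FF\circ\FF=\tfrac{25}{2}\,\ide$ on $A^3$ forces $a=0$. This is already injectivity at the level of rational equivalence, and a fortiori at the level of $B^\ast$. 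Your proposed detours through $S\times S$ (via Hilbert--Chow) or through $Y$ (via the incidence correspondence), together with appeals to ``algebraic-equivalence-controlled'' Griffiths groups, are therefore unnecessary, and as stated they are too vague to constitute a proof in any case.

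A smaller issue: your justification of $B^2(X)=B^2_{(0)}(X)$ by pointing to the heuristic discussion before conjecture \ref{conjB} is not adequate---that discussion is conjectural for general hyperk\"ahler varieties. The paper establishes the claim here concretely: for the Shen--Vial bigrading one has $A^2(X)=A^2_{(0)}(X)\oplus A^2_{(2)}(X)$ (there is no $(1)$-piece), and Shen--Vial exhibit a correspondence $L\in A^2(X\times X)$ with $A^2_{(2)}(X)=L_\ast A^4_{(2)}(X)$. Since $A^4_{(2)}(X)$ consists of homologically trivial $0$--cycles, it lies in $A^4_{alg}(X)$, hence $A^2_{(2)}(X)\subset A^2_{alg}(X)$ and $B^2(X)=B^2_{(0)}(X)$.
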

  
  \begin{nonumbering}[=theorem \ref{kummer}] Let $X=K_m(A)$ be a generalized Kummer variety of dimension $2m$. The cycle class map induces an injection
    \[ E^{2m-1}(X)\ \hookrightarrow\ H^{4m-2}(X,\QQ)\ .\]
  \end{nonumbering}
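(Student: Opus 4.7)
My strategy rests on the multiplicative Chow--K\"unneth (MCK) decomposition for $K_m(A)$ constructed by Fu--Tian--Vial. This MCK induces a multiplicative bigrading on $A^*(X)$ which descends to a bigrading $B^*_{(*)}(X)$ on cycles modulo algebraic equivalence. The proof then naturally splits into two parts: first, showing that $E^{2m-1}(X) \subset B^{2m-1}_{(0)}(X)$; second, showing that the cycle class map is injective on the weight-zero piece $B^{2m-1}_{(0)}(X)$.

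For the first part, multiplicativity of the bigrading reduces the problem to placing each set of generators of $E^*(X)$ in the $(0)$-piece. Divisor classes lie in $B^1_{(0)}(X)$ by construction of the MCK. The equality $B^2(X) = B^2_{(0)}(X)$ for any hyperk\"ahler variety follows from the argument indicated in the introduction, analogous to the proof of proposition \ref{lessthan0}. Finally, the Chern classes of the tangent bundle of $K_m(A)$ are shown by Fu--Tian--Vial to lie in the $(0)$-piece of their MCK. Combining these three facts with multiplicativity of the bigrading yields $E^{2m-1}(X) \subset B^{2m-1}_{(0)}(X)$.

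The second part is the crux: I must establish the vanishing $B^{2m-1}_{(0)}(X)_{hom} = 0$. The approach is to exploit Fu--Tian--Vial's explicit description of the MCK projectors on $K_m(A)$, which are built out of correspondences pulled back from $A^{m+1}$ via the diagram relating $K_m(A)$ to the Hilbert--Chow model of $A^{[m+1]}$ and to the $\Sy_{m+1}$-quotient of $A^{m+1}$. Under this description, the weight-zero piece $B^{2m-1}_{(0)}(X)$ is controlled by the weight-zero part of Beauville's motivic decomposition of $B^*(A^{m+1})$. On an abelian variety, the weight-zero piece of Beauville's decomposition consists of classes that inject into cohomology (it is the ``algebraic/N\'eron--Severi'' summand), so the desired vanishing follows once one has transferred the question to $A^{m+1}$.

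The main expected obstacle is precisely this transfer: one must propagate the Beauville decomposition through the Hilbert--Chow resolution and the symmetric-group quotient, and then check compatibility with the Fu--Tian--Vial MCK, \emph{while working modulo algebraic equivalence rather than rational equivalence}. Algebraic equivalence is the more delicate of the two relations here because the standard machinery (correspondences, motivic projectors, desingularization) is usually phrased rationally, so each step requires a careful check that no cycle is killed by rational equivalence without also being killed algebraically. Once this bookkeeping is settled, the conclusion follows.
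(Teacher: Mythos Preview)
Your overall architecture is right, but there is a genuine gap in the first part and a real divergence from the paper in the second.

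\textbf{The gap.} You assert $B^2(X)=B^2_{(0)}(X)$ and invoke ``the argument indicated in the introduction, analogous to the proof of proposition \ref{lessthan0}''. But proposition \ref{lessthan0} does \emph{not} prove this: it only shows $B^2(X)=\bigoplus_{j\le 0}B^2_{(j)}(X)$, i.e.\ it kills the pieces with $j=1,2$ (via \cite{hard} for $j=2$ and via the generalized Hodge conjecture for self-products of abelian surfaces plus Kimura nilpotence for $j=1$). The vanishing of the \emph{negative} weight pieces of $B^2$ is not established anywhere, and for an MCK built out of Beauville's decomposition of $A^{m+1}$ there is no reason to expect it for free. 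Consequently multiplicativity only gives $E^\ast(X)\subset\bigoplus_{j\le 0}B^\ast_{(j)}(X)$, not $E^{2m-1}(X)\subset B^{2m-1}_{(0)}(X)$. The paper repairs this at the level of $1$-cycles rather than codimension-$2$ cycles: it shows directly that $B^{2m-1}_{(j)}(X)=0$ for $j<0$, using the explicit shape of the top projectors in the FTV decomposition ($\Pi^X_{4m-1}=0$ and $\Pi^X_{4m}=X\times x$, both of which act trivially on $B^{2m-1}$). Your plan omits this step entirely.

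\textbf{The different route.} For the injectivity of $B^{2m-1}_{(0)}(X)\to H^{4m-2}(X)$ you propose to chase the FTV construction back to $A^{m+1}$ and invoke Beauville's result that $A^i_{(0)}$ of an abelian variety injects into cohomology. This is plausible but, as you yourself note, requires tracking compatibilities through the Hilbert--Chow morphism and the $\Sy_{m+1}$-quotient, all modulo algebraic equivalence. The paper avoids this entirely: it observes that (since $B(X)$ holds) the K\"unneth component $\pi^X_{4m-2}$ is represented by a cycle $\gamma$ supported on $X\times S$ with $S$ a smooth surface, so that $\gamma_\ast B^{2m-1}_{hom}(X)$ factors through $B^1_{hom}(S)=0$; then Kimura nilpotence (finite-dimensionality of the motive of $K_m(A)$) transfers this vanishing from $\gamma$ to the actual CK projector $\Pi^X_{4m-2}$. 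This is shorter and sidesteps the bookkeeping you anticipated as the ``main expected obstacle''.
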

  
 Our evidence is, alas, restricted to $1$--cycles. The reason for this restriction is that in proving theorems \ref{HK4} and \ref{kummer}, we rely on the bigrading of the Chow ring of $X$ constructed unconditionally in \cite{SV} resp. \cite{FTV}. In both cases, it is not known whether the bigrading satisfies property (\ref{ideal}) for all $i$ (this is only known for $i\ge\dim X-1$).

 \begin{convention} In this article, the word {\sl variety\/} will refer to a reduced irreducible scheme of finite type over $\C$. A {\sl subvariety\/} is a (possibly reducible) reduced subscheme which is equidimensional. 

{\bf All groups of cycles will be with rational coefficients}: we will denote by $A_j(X)$ the Chow group of $j$--dimensional algebraic cycles on $X$ with $\QQ$--coefficients; for $X$ smooth of dimension $n$ we will write $A^i(X):=A_{n-i}(X)$. Likewise, we will write $B_j(X)$ for the group of
$j$--dimensional algebraic cycles on $X$ with $\QQ$--coefficients, modulo algebraic equivalence, and $B^i(X):=B_{n-i}(X)$ for $X$ smooth.

The notations $A^i_{hom}(X)$, $A^i_{alg}(X)$ will be used to indicate the subgroups of homologically trivial, resp. algebraically trivial cycles. Likewise, we write $B^i_{hom}(X)$ for what is commonly  known as the Griffiths group of $X$.

We will write $H^j(X)$ 
to indicate singular cohomology $H^j(X,\QQ)$.
\end{convention}

  \section{Some hyperk\"ahler fourfolds}
  
  \begin{theorem}\label{HK4} Let $X$ be either 
  
  \noindent
  (\rom1) a Hilbert scheme $X=S^{[2]}$, where $S$ is a projective $K3$ surface, or
  
  \noindent
  (\rom2) a Fano variety of lines $X=F(Y)$, where $Y\subset\PP^5(\C)$ is a very general cubic fourfold.
  
  Let $E^\ast(X)\subset B^\ast(X)$ be the $\QQ$--subalgebra generated by $B^1(X)$, $B^2(X)$ and the Chern classes. Then the cycle class map induces an injection
    \[ E^i(X)\ \hookrightarrow\ H^{2i}(X)\ \ \ \hbox{for}\ i\ge 3\ .\]
   \end{theorem}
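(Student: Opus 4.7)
The case $i=4$ is essentially trivial: for a smooth connected projective variety, every $0$-cycle is algebraically equivalent (over $\QQ$) to a rational multiple of a fixed point, so $B^4(X)=B_0(X)\cong\QQ$ via the degree map, and the cycle class map to $H^8(X)\cong\QQ$ is automatically injective on any subgroup, in particular on $E^4(X)$. So the content of the theorem is the case $i=3$.

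For $i=3$, the plan is to invoke the multiplicative Chow--K\"unneth (MCK) decomposition constructed in \cite{SV} in case (i) and in \cite{SV2}, \cite{FTV} in case (ii). This gives a bigrading
\[ A^*(X)=\bigoplus_{i,j} A^i_{(j)}(X)\ , \]
compatible with intersection product. Since the Chow--K\"unneth projectors act by correspondences (so they preserve algebraic equivalence), this bigrading descends to a bigrading $B^*(X)=\bigoplus_{i,j} B^i_{(j)}(X)$. The proof then reduces to two ingredients: (a) that $E^3(X)\subset B^3_{(0)}(X)$, and (b) that $B^3_{hom}(X)=\bigoplus_{j\ge 1}B^3_{(j)}(X)$. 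The latter is precisely property (\ref{ideal}) in codimension $i=\dim X-1=3$, which is known in both our cases (as mentioned in the introduction). These two facts together force $E^3(X)\cap B^3_{hom}(X)=0$, giving the required injection.

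To verify (a), by multiplicativity of the MCK and the fact that Chern classes lie in $A^*_{(0)}(X)$, it suffices to show $B^1(X)\subset B^1_{(0)}(X)$ and $B^2(X)\subset B^2_{(0)}(X)$. The first is automatic because any hyperk\"ahler variety satisfies $H^1(X,\OO)=0$, whence $A^1_{hom}(X)=0$ and $A^1(X)=A^1_{(0)}(X)$. The second, namely $B^2_{(j)}(X)=0$ for $j\ge 1$, is equivalent to $A^2_{(j)}(X)\subset A^2_{alg}(X)$ for $j=1,2$. Since $H^3(X)=0$ for both our hyperk\"ahler fourfolds (by the Beauville--Donagi isomorphism in case (ii), and by G\"ottsche's formula in case (i)), the projector $\pi^3$ can be taken to be zero, whence $A^2_{(1)}(X)=0$ outright; for the piece $A^2_{(2)}(X)$, the claim that it is algebraically trivial will be extracted from the explicit description of the projectors $\pi^k$ (and of the subgroups $A^i_{(j)}$) given in \cite{SV} resp. \cite{SV2},\cite{FTV}.

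The main obstacle will be pinning down the inclusion $A^2_{(2)}(X)\subset A^2_{alg}(X)$ for each construction. In case (i), this should follow from the description of $A^2_{(2)}(S^{[2]})$ in terms of $A^2_{hom}(S)=0$ and the Nakajima-type action; in case (ii), from the explicit formula for the projector $\pi^4$ in \cite{SV2} and the very general hypothesis, which forces the transcendental summand of $A^2(F(Y))$ to have its homologically trivial part lifted from algebraically trivial cycles on the cubic $Y$. Once this step is in place, the multiplicativity of the MCK, combined with known results on property (\ref{ideal}) in the top two codimensions, closes the argument with no further computation.
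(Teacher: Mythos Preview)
Your overall architecture is exactly that of the paper: reduce to $i=3$, use the multiplicative bigrading from \cite{SV} (which descends to $B^\ast$), show $E^\ast(X)\subset B^\ast_{(0)}(X)$, and invoke injectivity of $B^3_{(0)}(X)\to H^6(X)$. Two points, however, need correction.

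First, your references for case (\rom2) are off: the bigrading on the Fano variety of lines is constructed in \cite{SV}, not in \cite{SV2} or \cite{FTV} (the former treats $X^{[3]}$, the latter generalized Kummer varieties). The ``very general'' hypothesis is there only to ensure the Fourier decomposition is multiplicative; it plays no role in any transcendental-summand argument.

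Second, and more seriously, the crucial step $A^2_{(2)}(X)\subset A^2_{alg}(X)$ is left as a promise, and your sketch for case (\rom1) contains a genuine error: you write ``in terms of $A^2_{hom}(S)=0$'', but for a $K3$ surface $A^2_{hom}(S)$ is infinite-dimensional (Mumford). The paper handles this step uniformly and cleanly, with no case analysis: by \cite[Theorems 2.2 and 2.4]{SV} there is a correspondence $L\in A^2(X\times X)$ with $A^2_{(2)}(X)=L_\ast A^4_{(2)}(X)$. Every class in $A^4_{(2)}(X)$ is a homologically trivial $0$-cycle, hence algebraically trivial; since algebraic equivalence is adequate, $L_\ast$ preserves it, giving $A^2_{(2)}(X)\subset A^2_{alg}(X)$ at once. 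This replaces both of your case-by-case sketches and requires nothing about the transcendental part of the cubic.
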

   
  \begin{proof} Since algebraic and homological equivalence coincide for $0$--cycles, the $i=4$ case is trivially true. The interesting part of the statement is thus only the injectivity of $E^3(X)\to H^6(X)$.
  
  In both cases (\rom1) and (\rom2), there exists a bigraded ring structure $A^\ast_{(\ast)}(X)$ induced by the Fourier transform constructed in \cite{SV}. In both cases,
  the bigrading is also described by the action of a Chow--K\"unneth decomposition, and therefore
   the ring $B^\ast(X)$ inherits a bigrading $B^\ast_{(\ast)}(X)$. The Chern classes of $X$ are in $A^\ast_{(0)}(X)$ (in case (\rom1), this is \cite[Theorem 2]{V6}; in case (\rom2), this follows from the fact that the Chern classes are polynomials in the classes labelled $l\in A^1(X),c\in A^2(X)$ in \cite{SV} (coming from the tautological bundle on the Grassmannian), and it is known that $c\in A^2_{(0)}(X)$ \cite[Theorem 21.9(\rom3)]{SV}).
  
  The theorem now follows from the following claim:
  
  \begin{claim}\label{only0} One has $B^2(X)=B^2_{(0)}(X)$.
  \end{claim}
  
  Indeed: the claim, combined with the above remarks, implies that $E^\ast(X)\subset B^\ast_{(0)}(X)$. But we know (lemma \ref{inj} below) that $B^3_{(0)}(X)\to H^6(X)$ is injective, and so theorem \ref{HK4} is proven.
  
  The claim follows from the fact, proven by Shen--Vial \cite[Theorems 2.2 and 2.4]{SV}, that there exists a correspondence $L\in A^2(X\times X)$ with the property that
    \[  A^2_{(2)}(X)=  L_\ast A^4_{(2)}(X)\ .\]
    Indeed, any $0$--cycle $a\in A^4_{(2)}(X)$ is (homologically trivial hence) algebraically trivial. As algebraic equivalence is an adequate equivalence relation, it follows that $L_\ast(a)$ is algebraically trivial and so
    \[ A^2_{(2)}(X)\ \subset\ A^2_{alg}(X)\ .\]
    This proves the claim:
    \[ B^2(X)= A^2(X)/ A^2_{alg}(X) = \bigl( A^2_{(0)}(X)\oplus A^2_{(2)}(X)\bigr)/   A^2_{alg}(X) =     A^2_{(0)}(X)/ A^2_{alg}(X) =B^2_{(0)}(X)\ .\]

It only remains to prove the following lemma:

 \begin{lemma}[Shen--Vial \cite{SV}]\label{inj} Let $X$ be either 
 
  \noindent
  (\rom1) a Hilbert scheme $X=S^{[2]}$, where $S$ is a projective $K3$ surface, or
  
  \noindent
  (\rom2) a Fano variety of lines $X=F(Y)$, where $Y\subset\PP^5(\C)$ is any smooth cubic fourfold. 
  
  Then $A^3_{(0)}(X)\cap A^3_{hom}(X)=0$.
  \end{lemma}
  
  \begin{proof} This is contained in \cite{SV}.  A quick way of proving the lemma is as follows: let $\FF$ be the Fourier transform of \cite{SV}. We have that $a\in A^3(X)$ is in $A^3_{(0)}(X)$ if and only if $\FF(a)\in A^1_{(0)}(X)=A^1(X)$ \cite[Theorem 2]{SV}. Suppose $a\in A^3_{(0)}(X)$ is homologically trivial. Then also $\FF(a)\in A^1(X)$ is homologically trivial, hence $\FF(a)=0$ in $A^1(X)$. But then, using \cite[Theorem 2.4]{SV}, we find that
      \[ {25\over 2} a = \FF\circ \FF(a)=0\ \ \ \hbox{in}\ A^3(X)\ .\]
  \end{proof}
    \end{proof} 
    
   In theorem \ref{HK4}(\rom2), we restricted to {\em very general\/} cubic fourfolds. The reason is that for the Fano variety $X$ of lines on any given smooth cubic fourfold, it is not yet known that the Fourier decomposition $A^\ast_{(\ast)}(X)$ is a bigraded ring structure (cf. \cite[Remark 22.9]{SV}). If we abandon the hypothesis ``very general'', we can obtain a weaker statement:

\begin{definition} Let $Y\subset\PP^5(\C)$ be a smooth cubic fourfold, and let $X$ be the Fano variety of lines on $Y$. One defines
  \[ A^1(X)_{prim}:= P_\ast (A^2(Y)_{prim})\ \ \ \subset\ A^1(X)\ ,\]
  where $P\in A^3(Y\times X)$ is the universal family of lines, and $A^2(Y)_{prim}:=\{c\in A^2(Y)\ \vert\ [c]\in H^4(Y)_{prim}\}$.
  We set $B^1(X)_{prim}=A^1(X)_{prim}$.
\end{definition}

  \begin{proposition}\label{weak} Let $Y\subset\PP^5(\C)$ be any smooth cubic fourfold, and let $X=F(Y)$ be the Fano variety of lines in $Y$. Let $b\in B^3(X)$ be a
  cycle of the form
    \[  b=\sum_{k=1}^r a_k\cdot d_k\ \ \ \in B^3(X)\ ,\]
    where $a_k\in B^2(X)$ and $d_k\in B^1(X)_{prim}$. Then $b$ is algebraically trivial if and only if $b$ is homologically trivial.
   \end{proposition}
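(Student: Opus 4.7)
The implication \emph{algebraically trivial} $\Rightarrow$ \emph{homologically trivial} is immediate, so I focus on the converse. The plan is to mimic the argument of theorem \ref{HK4}: show that $b$ lies in the lowest piece $B^3_{(0)}(X)$ of a canonical decomposition of $B^\ast(X)$, and then invoke lemma \ref{inj} (which, crucially, is stated for \emph{any} smooth cubic fourfold, without genericity hypothesis) to conclude that $b$ homologically trivial forces $b=0$ in $B^3(X)$, i.e.\ that $b$ is algebraically trivial.

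First I would note that the Fourier decomposition $A^\ast(X)=\bigoplus_s A^\ast_{(s)}(X)$ of \cite{SV} is constructed for an \emph{arbitrary} smooth cubic fourfold, and is induced by a Chow--K\"unneth decomposition. Being motivic, it descends to a decomposition $B^\ast(X)=\bigoplus_s B^\ast_{(s)}(X)$. Moreover the argument establishing claim \ref{only0} uses only the correspondence $L\in A^2(X\times X)$ of \cite[Theorems 2.2 and 2.4]{SV}, whose existence does not require any genericity assumption; hence one still has $B^2(X)=B^2_{(0)}(X)$. Choosing representatives $\alpha_k\in A^2_{(0)}(X)$ with $a_k\equiv\alpha_k$ modulo $A^2_{alg}(X)$, and using that the product of an algebraically trivial cycle with any other cycle is algebraically trivial, one gets
\[ b\ \equiv\ \sum_k \alpha_k\cdot d_k\quad\text{modulo}\ A^3_{alg}(X)\ . \]

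The proposition thus reduces to the \emph{restricted} multiplicativity
\[ A^2_{(0)}(X)\cdot A^1(X)_{prim}\ \subset\ A^3_{(0)}(X)+A^3_{alg}(X)\ . \]
This is where the primitivity hypothesis on the $d_k$ is decisive: although the full bigraded ring property on $A^\ast(X)$ is open for a non--very--general cubic (cf.\ \cite[Remark 22.9]{SV}), multiplication by a primitive divisor can be analysed directly. Writing $d=P_\ast(c)$ with $c\in A^2(Y)_{prim}$ and applying the projection formula gives
\[ \alpha\cdot d\ =\ Q_\ast(c)\ ,\qquad Q:=P\cdot \pi_X^\ast\alpha\ \in\ A^5(Y\times X)\ . \]
One then examines the action of the correspondence $Q$ on the primitive summand of $A^2(Y)$ via the Fourier formalism of \cite[Sections 21--22]{SV}: the primitive part corresponds to the transcendental piece of the motive of $Y$, and the eigenvalue computations carried out there show that $Q_\ast$ lands in the eigenspace $A^3_{(0)}(X)$ modulo algebraic equivalence. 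Granted this, $b\in B^3_{(0)}(X)$, and lemma \ref{inj} concludes.

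The main obstacle is the verification of this restricted multiplicativity. Concretely, one must show that the obstructions responsible for the failure of full multiplicativity in \cite{SV} over a general smooth cubic fourfold all involve non-primitive divisor classes (essentially the Pl\"ucker polarisation $l$ and its intersections with $c$), and hence drop out as soon as one restricts the divisor factor to lie in $A^1(X)_{prim}$. This should be a direct, albeit somewhat tedious, unravelling of the eigenspace relations in \cite[Sections 21--22]{SV}.
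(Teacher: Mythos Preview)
Your strategy is exactly the paper's: use claim \ref{only0} (valid for any smooth cubic) to lift the $a_k$ to $\alpha_k\in A^2_{(0)}(X)$, reduce to the restricted multiplicativity
\[ A^2_{(0)}(X)\cdot A^1(X)_{prim}\ \subset\ A^3_{(0)}(X)\ ,\]
and finish with lemma \ref{inj}. The only gap is that you treat this restricted multiplicativity as an open ``main obstacle'' to be extracted by hand from the Fourier eigenvalue relations, whereas in fact it is already established as \cite[Proposition 22.7]{SV} (note: on the level of Chow, not merely modulo $A^3_{alg}(X)$). Once you cite that proposition, your argument is complete and coincides with the paper's; the projection--formula computation and the proposed unravelling of \cite[Sections 21--22]{SV} are unnecessary.
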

   
   \begin{proof} Claim \ref{only0} still applies to $X$, and so the $a_k$ are in $B^2_{(0)}(X)$. As such, they can be lifted to $\bar{a_k}\in A^2_{(0)}(X)$. One knows
   that
     \[ A^2_{(0)}(X)\cdot A^1(X)_{prim}\ \subset\ A^3_{(0)}(X) \]
     \cite[Proposition 22.7]{SV}, and thus $\bar{a_k}\cdot d_k\in A^3_{(0)}(X)$. It follows that $b\in B^3_{(0)}(X)$, and one concludes using lemma \ref{inj}.
    \end{proof}

  \section{Generalized Kummer varieties}        
    
  \begin{theorem}\label{kummer} Let $A$ be an abelian surface, and let $X=K_m(A)$ be a generalized Kummer variety of dimension $2m$.  Let $E^\ast(X)\subset B^\ast(X)$ be the $\QQ$--subalgebra generated by $B^1(X)$, $B^2(X)$ and the Chern classes. The cycle class map induces injections
    \[ E^i(X)\ \hookrightarrow\ H^{2i}(X)\ \]
   for $i\ge 2m-1$. 
    \end{theorem}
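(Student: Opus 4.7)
The plan is to mirror the proof of Theorem \ref{HK4}, substituting the multiplicative Chow--K\"unneth decomposition for generalized Kummer varieties constructed by Fu--Tian--Vial \cite{FTV} for the Shen--Vial construction used there. The case $i=2m$ is automatic, since algebraic and homological equivalence coincide for $0$-cycles, so the entire content of the theorem is the injection $E^{2m-1}(X)\hookrightarrow H^{4m-2}(X)$. By \cite{FTV} the Chow ring $A^\ast(X)$ carries a bigraded ring structure $A^\ast_{(\ast)}(X)$ induced by a Chow--K\"unneth decomposition, which descends to a bigrading $B^\ast_{(\ast)}(X)$; divisors lie in $A^1_{(0)}(X)$ by construction and the Chern classes of $X$ lie in $A^\ast_{(0)}(X)$ (the Kummer analog of \cite[Theorem 2]{V6}, established in \cite{FTV}). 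It therefore suffices to show (a) $B^2(X)=B^2_{(0)}(X)$, so that $E^\ast(X)\subset B^\ast_{(0)}(X)$, and (b) the cycle class map $B^{2m-1}_{(0)}(X)\to H^{4m-2}(X)$ is injective.

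For (a), the argument of Claim \ref{only0} should carry over: the Fourier-theoretic machinery of \cite{FTV} provides, for each weight $s>0$, a correspondence $L_s\in A^\ast(X\times X)$ with $A^2_{(s)}(X)\subset (L_s)_\ast A^{2m}_{(s)}(X)$. Since $A^{2m}_{(s)}(X)\subset A^{2m}_{hom}(X)=A^{2m}_{alg}(X)$ (for $0$-cycles) and algebraic equivalence is preserved by correspondences, we conclude $A^2_{(s)}(X)\subset A^2_{alg}(X)$ for every $s>0$, whence $B^2(X)=B^2_{(0)}(X)$.

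For (b), I would establish the Kummer analog of Lemma \ref{inj}, namely $A^{2m-1}_{(0)}(X)\cap A^{2m-1}_{hom}(X)=0$. Following the strategy of that lemma, one uses the Fourier transform $\FF$ of \cite{FTV} together with the Fourier inversion relation $\FF\circ\FF=c\cdot \ide$ (for a non-zero constant $c$) on the $(0)$-piece: since $\FF$ sends $A^{2m-1}_{(0)}(X)$ into $A^1_{(0)}(X)=A^1(X)$ and $A^1(X)\hookrightarrow H^2(X)$ is injective, a homologically trivial element of $A^{2m-1}_{(0)}(X)$ has vanishing Fourier transform and hence vanishes. The required injection $E^{2m-1}(X)\hookrightarrow H^{4m-2}(X)$ then follows by lifting elements of $E^{2m-1}(X)\subset B^{2m-1}_{(0)}(X)$ to $A^{2m-1}_{(0)}(X)$, as in the proof of Theorem \ref{HK4}. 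The main obstacle I foresee is step (a): the Fourier decomposition of $A^2(X)$ for a generalized Kummer variety is richer than in the $K3^{[n]}$ setting, with additional weight pieces coming from pull-backs from the abelian surface $A$, and one must check that each such piece is effectively swept out by algebraically trivial cycles via an appropriate correspondence extracted from \cite{FTV}.
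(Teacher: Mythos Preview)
Your overall architecture (reduce to $i=2m-1$; show $E^\ast(X)$ lands in the ``weight $0$'' part; then prove injectivity there) matches the paper, but the two key steps (a) and (b) rest on tools that \cite{FTV} does not supply, and this is a genuine gap rather than a technicality.

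For (a): the correspondences $L_s$ with $A^2_{(s)}(X)\subset (L_s)_\ast A^{2m}_{(s)}(X)$ are a feature of the Shen--Vial Fourier transform on the specific fourfolds of \cite{SV}; \cite{FTV} builds the multiplicative Chow--K\"unneth decomposition via orbifold/McKay methods and does not produce such operators. Moreover, you aim for $B^2(X)=B^2_{(0)}(X)$, but for generalized Kummer varieties one does not know that the negative-weight pieces $B^2_{(j)}(X)$, $j<0$, vanish (Murre's conjecture is open here). The paper only proves the weaker statement $B^2(X)=\bigoplus_{j\le 0}B^2_{(j)}(X)$: the piece $B^2_{(2)}(X)$ vanishes by a hard Lefschetz argument from \cite{hard}, and $B^2_{(1)}(X)=0$ is obtained by a completely different route --- since $H^3(X,\OO_X)=0$ and the generalized Hodge conjecture is known for self-products of abelian surfaces (hence for $X$, which is motivated by $A$), one finds a cycle $\gamma$ homologous to $\Pi^X_3$ and supported on (surface)$\times$(divisor), so $\gamma_\ast B^2_{hom}(X)=0$ for dimension reasons; Kimura nilpotence (finite-dimensional motive) then transfers this to $\Pi^X_3$.

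For (b): there is no Fourier transform $\FF$ with an inversion formula in \cite{FTV}, so your Lemma \ref{inj} argument has no direct analogue. Because the paper only controls $E^\ast(X)\subset\bigoplus_{j\le 0}B^\ast_{(j)}(X)$, it must in addition show $B^{2m-1}_{(j)}(X)=0$ for $j<0$, which it does by inspecting the FTV projectors ($\Pi^X_{4m-1}=0$ and $\Pi^X_{4m}=X\times x$). The injectivity of $B^{2m-1}_{(0)}(X)\to H^{4m-2}(X)$ is then obtained, not via Fourier inversion, but by using the Lefschetz standard conjecture to represent $\pi^X_{4m-2}$ by a cycle supported on $X\times S$ with $S$ a surface (so it kills $B^{2m-1}_{hom}$ for dimension reasons), and again invoking Kimura nilpotence to pass to $\Pi^X_{4m-2}$. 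In short, the substitute for the missing Fourier machinery is the combination of the generalized Hodge/Lefschetz standard conjectures (available here because the motive is governed by an abelian surface) together with finite-dimensionality of the motive.
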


 \begin{proof} Thanks to \cite[Theorem 7.9]{FTV}, there exists a multiplicative Chow--K\"unneth decomposition for $X$ and so the Chow ring has a bigrading $A^\ast_{(\ast)}(X)$. 
 Moreover, the Chern classes of $X$ are in $A^\ast_{(0)}(X)$ \cite[Proposition 7.13]{FTV}. Let $B^\ast_{(\ast)}(X)$ denote the induced bigrading modulo algebraic equivalence.
 
 \begin{proposition}\label{lessthan0} We have
   \[ B^2(X)=\bigoplus_{j\le 0} B^2_{(j)}(X)\ .\]
  \end{proposition} 
  
  \begin{proof} One knows that $B^i_{(i)}(X)=0$ for all $i>0$ \cite[Corollary 18]{hard}. It remains to check that $B^2_{(1)}(X)=0$. 
  By definition, we have
    \[ B^2_{(1)}(X) = (\Pi^X_{3})_\ast B^2(X)\ ,\]
    where $\{\Pi^X_j\}$ is the multiplicative Chow--K\"unneth decomposition furnished by \cite{FTV}. Since $B^2_{(1)}(X)\subset B^2_{hom}(X)$, and $\Pi^X_3$ is idempotent, we also have
    \begin{equation}\label{also}      B^2_{(1)}(X) = (\Pi^X_{3})_\ast B^2_{hom}(X)   \ .\end{equation} 
     Next, we observe that (as $X$ is hyperk\"ahler)
  $H^3(X,\OO_X)=0$. Since the generalized Hodge conjecture is known to hold for self--products of abelian surfaces \cite[7.2.2]{Ab}, \cite[8.1(2)]{Ab2}, and generalized Kummer varieties are {\em motivated\/} by abelian surfaces in the sense of \cite{Ara}, the generalized Hodge conjecture is true for generalized Kummer varieties (for the usual Hodge conjecture, this was noted in \cite[Theorem 3.3]{Xu}). In particular, $H^3(X)$ is supported on a divisor $D\subset X$, and $H^{2n-3}(X)$ is supported on a $2$--dimensional subvariety $S\subset X$. Using the Lefschetz $(1,1)$ theorem, one can find a cycle $\gamma\in A^{2m}(X\times X)$ representing the K\"unneth component $\pi^X_3$ and supported on $S\times D$. For dimension reasons, we have
    \[  \gamma_\ast B^2_{hom}(X)=0 \ .\]
    (Indeed, the action of $\gamma$ on $B^2_{hom}(X)$ factors over $B^2_{hom}(\wt{S})=0$, where $\wt{S}\to S$ denotes a desingularization.)
  Applying lemma \ref{equiv} below, this implies that also
   \[    (\Pi^X_{3})_\ast B^2_{hom}(X)=0\ ,\]
   and we are done in view of (\ref{also}).
   
   Here, we have used the following lemma. (The lemma applies to our set--up, because generalized Kummer varieties have finite--dimensional motive \cite{Xu}, \cite{FTV}.)
   
   \begin{lemma}\label{equiv} Let $X$ be a smooth projective variety of dimension $n$, and assume $X$ has finite--dimensional motive. Let $\Pi$ and $\pi\in A^n(X\times X)$ be such that $\Pi$ is idempotent and $\Pi=\pi$ in $H^{2n}(X\times X)$. Then 
   \[  \pi_\ast B^i_{hom}(X)=0\  \Rightarrow  \  \Pi_\ast B^i_{hom}(X)=0\ .\]
    \end{lemma}
    
  \begin{proof} We have
      \[ \Pi-\pi\ \ \in\ A^{n}_{hom}(X\times X)\ .\]
    From Kimura's nilpotence theorem \cite{Kim}, it follows that there exists $N\in \NN$ such that
      \[ (\Pi-\pi)^{\circ N}=0\ \ \ \hbox{in}\ A^{n}(X\times X)\ .\]
      Developing this expression, we obtain
      \[  \Pi=\Pi^{\circ N}= P_1+ P_2+\cdots +P_m\ \ \ \hbox{in}\ A^{n}(X\times X)_{\QQ}\ ,\]  
      where each $P_j$ is a composition of correspondences containing at least one copy of $\pi$. But then (by hypothesis) the right--hand side acts as zero on $B^i_{hom}(X)_{}$, and hence so does the left--hand side.      
  \end{proof}  
  
  This ends the proof of proposition \ref{lessthan0}.     
   \end{proof}

 Proposition \ref{lessthan0}, combined with the fact that the Chern classes are in $B^\ast_{(0)}(X)$, implies that there is an inclusion
   \[ E^\ast(X)\ \subset\ \bigoplus_{j\le 0} B^\ast_{(j)}(X)\ .\] 
  
  Theorem \ref{kummer} follows from this inclusion, combined with the following lemma:
  
  \begin{lemma} Let $X$ be a generalized Kummer variety of dimension $2m$. Then
    \[ B^{2m-1}_{(j)}(X) =0\ \ \ \hbox{for}\ j<0\ ,
   \ \ \ \hbox{and}\ \ B^{2m-1}_{(0)}(X)\cap B^{2m-1}_{hom}(X)=0\ .\]
   \end{lemma}
   
   To prove the lemma, we note that by definition,
     \[  B^{2m-1}_{(j)}(X) = (\Pi^X_{4m-2-j})_\ast B^{2m-1}(X)\ ,\]
     where $\{ \Pi^X_k\}$ is a multiplicative Chow--K\"unneth decomposition \cite{FTV}. Inspecting the construction in \cite{FTV}, one finds that $\Pi^X_{4m-1}=0$ and
     $\Pi^X_{4m}$ is of the form $X\times x$, where $x\in X$. This proves the first statement.
     
     As for the second statement of the lemma, we observe that there exists a cycle $\gamma\in A^{2m}(X\times X)$ representing the K\"unneth component $\pi^X_{4m-2}$ and supported on $X\times S$, where $S\subset X$ is a smooth surface (this is a general fact, for any variety $X$ verifying the Lefschetz standard conjecture $B(X)$, cf. \cite[Theorem 7.7.4]{KMP}). For dimension reasons, we have
     \[  \gamma_\ast B^{2m-1}_{hom}(X)=0\ . \]
     (Indeed, the action of $\gamma$ on $B^{2m-1}_{hom}(X)$ factors over $B^1_{hom}({S})=0$). By lemma \ref{equiv}, this implies that
     \[ (\Pi^X_{4m-2})_\ast  B^{2m-1}_{hom}(X)=0\ . \]   
     On the other hand, $\Pi^X_{4m-2}$ is a projector on $B^{2m-1}_{(0)}(X)$, and so
     \[ B^{2m-1}_{(0)}(X) \cap B^{2m-1}_{hom}(X)= (\Pi^X_{4m-2})_\ast  B^{2m-1}_{hom}(X)\ .\]          
   \end{proof}

\vskip1cm
\begin{nonumberingt} Thanks to all participants of the Strasbourg 2014/2015 ``groupe de travail'' based on the monograph \cite{Vo} for a stimulating atmosphere.
Thanks to Yoyo, Kai and Len for wonderful Christmas holidays. Thanks to the referee for highly pertinent remarks.
\end{nonumberingt}

\vskip1cm

\end{document}